\title{Error analysis of the PO method for the partially observed Lorenz 96 model with and without the covariance projection \thanks{\funding{This work was supported by RIKEN Junior Research Associate Program, JST SPRING JPMJSP2110, and JST MIRAI JPMJMI22G1.}}}
\author{Kota Takeda\thanks{Department of Applied Physics, Graduate School of Engineering, Nagoya University, Furo-cho, Chikusa-ku, Nagoya, Aichi, 464--8603, Japan (\email{takeda@na.nuap.nagoya-u.ac.jp}).}}
\crefname{assumption}{Assumption}{Assumptions}
\setlist[enumerate]{leftmargin=.5in}
\setlist[itemize]{leftmargin=.5in}
\newcommand{\A}{\mathcal{A}}
\newcommand{\B}{\mathcal{B}}
\newcommand{\E}{\mathbb{E}}
\renewcommand{\L}{\mathcal{L}}
\newcommand{\N}{\mathbb{N}}
\newcommand{\Prob}{\mathbb{P}}
\newcommand{\Proj}{\Pi}
\newcommand{\R}{\mathbb{R}}
\newcommand{\Sb}{\mathbb{S}}
\newcommand{\bzero}{\bm{0}}
\newcommand{\bone}{\bm{1}}
\newcommand{\bw}{\bm{w}}
\newcommand{\bdelta}{\bm{\delta}}
\newcommand{\bepsilon}{\bm{\epsilon}}
\newcommand{\bphi}{\bm{\phi}}
\newcommand{\bxi}{\bm{\xi}}
\newcommand{\norm}[1]{|#1|}
\newcommand{\pnorm}[1]{\|#1\|}
\newcommand{\opnorm}[1]{\left|#1\right|_{\L}}
\newcommand{\opnormsub}[2]{|#1|_{\L(#2)}}
\newcommand{\wc}{{}\cdot{}}
\newcommand{\bracket}[2]{\left\langle #1, #2 \right\rangle}
\DeclareMathOperator{\rank}{rank}
\DeclareMathOperator{\ran}{Ran}
\DeclareMathOperator{\Ker}{Ker}
\DeclareMathOperator{\cov}{Cov}
\newcommand{\x}{u}
\newcommand{\bx}{\bm{u}}
\newcommand{\Xe}{U}
\newcommand{\z}{v}
\newcommand{\bz}{\bm{\z}}
\newcommand{\Ze}{V}
\newcommand{\Zehat}{\widehat{\Ze}}
\newcommand{\zf}{\widehat{\z}} 
\newcommand{\bzf}{\widehat{\bz}} 
\newcommand{\Pf}{\widehat{P}} 
\newcommand{\za}{\z} 
\newcommand{\y}{y}
\newcommand{\by}{\bm{\y}}
\newcommand{\Ny}{N_y}
\newcommand{\Ne}{m}
\newcommand{\Mdyn}{\Psi}
\newcommand{\Hobs}{H}
\newcommand{\Robs}{R}
\newcommand{\robs}{r}
\newcommand{\err}{\bdelta}
\newcommand{\errhat}{\widehat{\bdelta}}
\definecolor{Orange}{HTML}{EB8C00}
\begin{document}

\maketitle

\begin{abstract}
We consider the filtering problem with the partially observed Lorenz 96 model.
Although the accuracy of the 3DVar filter in this problem has been established, the theoretical guarantee for the ensemble Kalman filter (EnKF) remains limited due to the analytical difficulty of handling non-symmetric matrices that emerge in the partial observation setting.
This study establishes uniform-in-time error bounds of a stochastic variant of the EnKF, known as the perturbed observation (PO) method.
By utilizing additive covariance inflation, we successfully obtain the bounds both with and without projecting the background covariance onto the observation space.
Our analysis with the projection complements existing results for the deterministic variant of the EnKF, while our approach without the projection offers an extended mathematical framework to handle the non-symmetric matrix products directly.
A numerical example validates the theoretical findings and shows comparable accuracies between the two settings.
\end{abstract}

\begin{keywords}
ensemble Kalman filter, error analysis, partial observations, Lorenz 96 model, data assimilation
\end{keywords}

\begin{MSCcodes}
62M20, 62F15, 35R30, 93C55, 65C05
\end{MSCcodes}

\section{Introduction}
\label{sec:introduction}
We study the filtering problem for chaotic dynamical systems.
In this problem, small initial errors rapidly grow due to the chaotic nature of the systems. To suppress the error growth, noisy and partial observations are incorporated in data assimilation approach.
We focus on filters based on the least squares.
The three-dimensional variational method (3DVar)~\cite{lorencAnalysisMethodsNumerical1986} uses a fixed background covariance matrix $\Pf$.
On the other hand, the ensemble Kalman filter (EnKF)~\cite{evensenDataAssimilationEnsemble2009} computes the covariance $\Pf_n$ dynamically using an ensemble of predictions. Although the 3DVar is computationally cheap, it does not capture flow-dependent uncertainty. In contrast, the EnKF adapts to time-varying uncertainty from nonlinear dynamics.

Mathematical analysis investigates the fundamental properties of the filters~\cite{lawDataAssimilationMathematical2015,tongNonlinearStabilityEnsemble2016}.
Especially, error analysis is essential for ensuring the filter accuracy, i.e., the squared state estimation error is scaled with the observation noise variance.
For fully observed systems, theoretical bounds for the 3DVar and the EnKF have been well studied~\cite{kellyWellposednessAccuracyEnsemble2014,lawDataAssimilationMathematical2015,takedaUniformErrorBounds2024}.
In these analyses, a full-rank background covariance is crucial for ensuring filter accuracy, as it guarantees that the filter accounts for uncertainty in all possible directions.
For the 3DVar, a trivial choice of the fixed background covariance (e.g., $\Pf = \alpha^2 I$) naturally makes it full-rank.$\Pf_n$ using a limited ensemble.
Because the dynamically updated covariance $\Pf_n$ in the EnKF is potentially rank-deficient, it must be regularized through techniques  known as covariance inflation.
For instance, applying additive covariance inflation ($\Pf_n \rightarrow \Pf_n + \alpha^2 I$ for $\alpha > 0$) renders the inflated covariance matrix full-rank, which plays a pivotal role in establishing the error bounds for the EnKF~\cite{kellyWellposednessAccuracyEnsemble2014}.

In the partial observation setting, we consider observations obtained via a projection matrix.
Under this setting, filter accuracy of the 3DVar is obtained for dissipative dynamical systems~\cite{blomkerAccuracyStabilityContinuoustime2013,lawANALYSIS3DVARFILTER2014}.
In particular, we focus on the result for the Lorenz 96 model~\cite{lorenzPredictabilityProblemPartly1996}, which is a chaotic phenomenological model widely used in atmospheric data assimilation.
Law et al. have established an error bound of the 3DVar for the Lorenz 96 model with a particular observation pattern~\cite{lawFilterAccuracyLorenz2016}.
However, the error analysis for the EnKF with partially observed systems is limited.
One of the key challenges is the non-symmetric matrix product $\Pf_n \Pi$ that naturally emerges in the error analysis of Kalman-type filters, where $\Pi$ is the orthogonal projection onto the observation space.
This non-symmetry makes it difficult to estimate the operator norm of matrices involving this product.
A recent study~\cite{sanz-alonsoLongTimeAccuracyEnsemble2025} established an error bound for a deterministic variant of the EnKF applied to dissipative dynamical systems, including the Lorenz 96 model. They achieved this by projecting the covariance onto the observation space as $\Pf_n \rightarrow \Pi \Pf_n \Pi$, which forces the matrix product to be symmetric.
Similarly, covariance projection was utilized in~\cite{biswasUnifiedFrameworkAnalysis2024} to establish error bounds for the EnKF to the two-dimensional Navier-Stokes equations.

In this study, we focus on a stochastic variant of the EnKF known as the perturbed observation (PO) method. 
We establish its error bounds for the partially observed Lorenz 96 model under the observation pattern considered in~\cite{lawDataAssimilationMathematical2015,sanz-alonsoLongTimeAccuracyEnsemble2025}.
To achieve this, we use two covariance modifications: additive covariance inflation by a scaled identity matrix, and covariance projection onto the observation space.
While a recent study~\cite{sanz-alonsoLongTimeAccuracyEnsemble2025} uses stochastic inflation—which requires a detailed analysis of the covariance eigenvalues—our mathematically simpler additive inflation allows us to focus directly on the challenges of partial observations. 
As previously discussed, the core challenge is handling the non-symmetric matrix product $\Pf_n \Pi$.
Although the covariance projection avoids this non-symmetry entirely, we successfully establish the error bounds both with and without the projection.
Specifically, our result with the projection provides a new theoretical bound for the stochastic EnKF. This runs parallel to recent findings for the deterministic variant using stochastic inflation~\cite{sanz-alonsoLongTimeAccuracyEnsemble2025}.
More importantly, our result without the projection offers an extended analytical approach to effectively handle non-symmetric matrices without relying on the projection.

The major contributions of the paper are summarized as follows:
\begin{enumerate}
    \item The establishment of the error bounds for the PO method applied to the partially observed Lorenz 96 model. The bound with the covariance projection complements existing studies on the deterministic EnKF, while the bound without the projection provides a new analytical perspective.
    \item Operator norm estimates for key non-symmetric matrices (e.g., $\Pf_n \Pi$). This advances the mathematical analysis of Kalman-type filters with partial observations without relying on the covariance projection.
\end{enumerate}

The rest of the paper is organized as follows.~\Cref{sec:setup} introduces the Lorenz 96 model and the PO method.~\Cref{sec:results} presents the main theoretical result, the error bounds of the PO method with and without the covariance projection.~\Cref{sec:numerical} contains numerical experiments that validate the theoretical findings.~\Cref{sec:summary} summarizes the theoretical and numerical findings.

\section{Nonlinear data assimilation problem and algorithm}
\label{sec:setup}
\subsection{Notations}
For $ L \in \N $, $ \norm{\bx} $ denotes the standard norm and $ \bx \cdot \bz $ denotes the standard inner product for the vectors $ \bx, \bz $ in Euclidean space $ \R^L $.
We express the identity matrix on $ \R^L $ by $ I_L \in \R^{L \times L} $.
For $ M \in \N $ and $ A \in \R^{L \times M} $, $ \opnorm{A} $ represents the operator norm of $ A: (\R^L, \norm{\wc}) \rightarrow (\R^M, \norm{\wc})$, defined by $ \opnorm{A} \coloneq \sup_{\bx \in \R^L, \bx \neq \bzero} \frac{\norm{A \bx}}{\norm{\bx}}$.
We also use the notation $ \opnormsub{A}{\R^L} $ to emphasize the domain.
Furthermore, $ \ker(A) $ denotes the kernel of $ A $, $ \ran(A) $ denotes the range of $ A $, and $ A^\top $ denotes the transpose of $ A $.
For $ \bx \in \R^L$ and $ \bz \in \R^M $, we define their product $ \bx \otimes \bz \in \R^{L \times M} $ by $ \bx \otimes \bz: \R^M \ni \bw \mapsto \bx (\bz \cdot \bw) \in \R^L $.
We also define $ \bx \bz^\top \coloneq \bx \otimes \bz$.
Let $ \Sb^L $ denote the space of all symmetric matrices in $ \R^{L \times L} $. 
For $ A \in \Sb^L $, we call $ A $ positive semidefinite if $ \bracket{\bx}{A\bx} \ge 0 $ for all $ \bx \in \R^L $, and we represent it by $ A \succeq 0 $.
Furthermore, if the inequality is strict for all $ \bx \neq \bzero$, we say that $ A $ is positive definite, denoted by $ A \succ 0 $.
For $ A, B \in \Sb^L $, the order $ A \succ (\text{resp.} \succeq) \, B $ means $ A - B \succ (\text{resp.} \succeq) \, 0 $.

Let $ N \in \N $, we treat two ensemble of vectors $ \Xe = [\bx^{(k)}]_{k=1}^N $ and $ \Ze = [\bz^{(k)}]_{k=1}^N \in (\R^L)^N$ as matrices in $ \R^{L \times N} $ so that the products $ \Xe \Ze^\top \in \R^{L \times L} $ and $ \Xe^\top \Ze \in \R^{N \times N} $ are given by
\begin{align*}
    \Xe \Ze^\top = \sum_{k=1}^N \bx^{(k)} \otimes \bz^{(k)}, \quad
    \Xe^\top \Ze = \left[\bx^{(i)} \cdot \bz^{(j)}\right]_{i, j=1}^N,
\end{align*}
which is consistent with the matrix product.
Moreover, for $ \bx_0 \in \R^L $ and $ T = [T_{i,j}]_{i,j=1}^N \in \R^{N \times N} $, we define a shift
\begin{align*}
    \bx_0 + \Xe &= \bx_0 \bone^\top + \Xe = [\bx_0 + \bx^{(k)}]_{k=1}^N \in \R^{L \times N},
\end{align*}
and a transformation
\begin{align*}
    \Xe T & = \left[\sum_{k=1}^N \bx^{(k)} T_{k,n}\right]_{n=1}^N \in \R^{L \times N}.
\end{align*}
For an ensemble $ \Ze = [\bz^{(k)}]_{k=1}^N \in \R^{L \times N} $, $ \overline{\bz} = \frac{1}{N} \sum_{k=1}^N \bz^{(k)} \in \R^L $ is the ensemble mean and $ d\Ze = [\bz^{(k)} - \overline{\bz}]_{k=1}^N \in \R^{L \times N} $ is the ensemble perturbation.
The ensemble $ \Ze $ is then decomposed into the mean and the perturbation as $ \Ze = \overline{\bz} + d\Ze $.
The (unbiased) ensemble covariance $ \cov(\Ze) \in \Sb^L $ is defined by
\begin{align*}
    \cov(\Ze) = \frac{1}{N-1} d\Ze d\Ze^\top.
\end{align*}
Note that it is easy to see $ \cov(\Ze) = \cov(d\Ze) $ and $ \cov(\Ze) \succeq 0 $.

\subsection{State space model}
For an integer $ {J} \ge 4 $, we consider the Lorenz 96 model with $ {J} $ components~\cite{lorenzPredictabilityProblemPartly1996}, which is a chaotic phenomenological model in atmospheric data assimilation~\cite{lorenzPredictabilityProblemPartly1996,lorenzOptimalSitesSupplementary1998}.
The governing equation of a state vector $ \bx = (\x^1, \dots, \x^{J})^\top \in \R^{J} $ is given by
\begin{align}
    \label{eq:l96}
    \frac {d\x^j}{dt} = (\x^{j+1} - \x^{j-2}) \x^{j-1} - \x^j + F, \quad j = 1, \dots, {J}, 
\end{align}
where $ \x^{-1} = \x^{{J}-1}$, $ \x^0 = \x^{J} $, and $ \x^{{J}+1} = \x^1$ and $ F \in \R $ is external forcing.
Since~\eqref{eq:l96} has a unique solution $ \bx(t; \bx_0) $ for any initial state $ \bx_0 \in \R^{J} $, we can define a semigroup $ \Psi_t: \R^{J} \rightarrow \R^{J} $ for $ t \ge 0 $ as $ \Psi_t(\bx_0) = \bx(t; \bx_0) $.
For a fixed interval $ h > 0 $, we denote $ \Psi = \Psi_h $ and consider a discrete dynamical system 
\begin{align}
    \label{eq:dynamics_disc}
    \bx_n = \Psi(\bx_{n-1}), \quad n \in \N .
\end{align}

We consider the following partial observation matrix introduced in~\cite{lawFilterAccuracyLorenz2016}.
\begin{definition}
    Suppose that $ {J} = 3 J' $ for $ J' \in \N $.
    We consider the observation matrix
    \begin{align}
        \label{eq:l96_observation}
        \Hobs = \left[\begin{array}{c}
            \bphi_1^\top \\
            \bphi_2^\top  \\
            \bphi_4^\top  \\
            \bphi_5^\top  \\
            \vdots \\
            \bphi_{3J'-2}^\top  \\
            \bphi_{3J'-1}^\top 
        \end{array}
        \right]
        \in \R^{2J' \times {J}},
    \end{align}
    where $ (\bphi_j)_{j=1}^{{J}} $ is the standard basis of $ \R^{{J}} $.
    We also define the associated projection matrix
    \begin{align}
        \label{eq:l96_projection}
        \Proj = \Hobs^\top \Hobs = [\bphi_1, \bphi_2, 0, \bphi_4, \bphi_5, 0, \dots] \in \R^{{J} \times {J}}.
    \end{align}
\end{definition}
Noisy and partial observations $ (\by_n)_{n \in \N} $ are obtained in the observation space $ \R^{2J'} \simeq \Hobs(\R^{J}) $ as 
\begin{align}
    \label{eq:obs}
    \by_n = \Hobs \bx_n + \bxi_n, \quad \bxi_n \sim \mathcal{N}(0, \Robs),
\end{align}
where $ \Robs \in \Sb^{2J'} $ is a covariance matrix with $ \Robs \succ 0 $ and $ \mathcal{N}(0, \Robs) $ is the Gaussian distribution with its mean $ 0 $ and covariance $ \Robs $.
The random noise $ \bxi_n $ represents the measurement error.

We rewrite~\eqref{eq:l96} in the following form to utilize the analysis of dissipative dynamical systems.
\begin{align}
    \label{eq:l96_dissipative}
    \frac{d\bx}{dt} + \A \bx + \B(\bx, \bx) = \bm{f},
\end{align}
where
\begin{align*}
  & \A = I_{J}, \quad \bm{f} = (F, \dots, F)^\top \in \R^{J},
\end{align*}
and 
\begin{align*}
  & \B(\bx, \bz) = \frac{1}{2}\left[
      \begin{array}{c}
        \z^2 \x^{J} + \x^2 \z^{J} - \z^{J} \x^{J-1} - \x^{J} \z^{J-1}\\
        \vdots\\
        \z^{i-1} \x^{i+1} + \x^{i-1} \z^{i+1} - \z^{i-2} \x^{i-1} - \x^{i-2} \z^{i-1}\\
        \vdots\\
        \z^{J-1} \x^1 + \x^{J-1} \z^1 - \z^{J-2} \x^{J-1} - \x^{J-2} \z^{J-1}
      \end{array}
    \right] \in \R^{J},
\end{align*}
for $ \bx, \bz \in \R^{J} $.
The following lemmas characterize the dissipative and chaotic properties of the Lorenz 96 model \cite{lawFilterAccuracyLorenz2016}.
\begin{lemma}
    \label{lem:l96_absorbing}
    Let $ \rho = \sqrt{2J}|F|$ and $ B(\rho) \coloneq \{\bx \in \R^J \mid \norm{\bx} \le \rho\} $.
    Then, for any $ \bx_0 \in \R^J $, there exists $ T = T(|\bx_0|) \ge 0 $ such that
    $ \bx(t) \in B(\rho) $ for the solution $\bx(t)$ to~\eqref{eq:l96_dissipative} with $ \bx_0 $ and for all $ t \ge T$.
\end{lemma}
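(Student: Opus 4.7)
The plan is to derive an energy estimate from \eqref{eq:l96_dissipative} and exploit the well-known energy conservation property of the Lorenz 96 nonlinearity. Let $E(t) = \norm{\bx(t)}^2$ be the kinetic energy. Taking the inner product of \eqref{eq:l96_dissipative} with $\bx(t)$ yields
\begin{align*}
    \tfrac{1}{2}\tfrac{d}{dt} E(t) + \bx \cdot \A \bx + \bx \cdot \B(\bx,\bx) = \bx \cdot \bm{f}.
\end{align*}
Since $\A = I_J$, the linear dissipation term equals $E(t)$. The first key step is to show that $\bx \cdot \B(\bx,\bx) = 0$ for every $\bx \in \R^J$. This is the standard energy-conserving property of the Lorenz 96 quadratic form and is verified by writing $\B(\bx,\bx)$ componentwise and observing that the sum $\sum_j \x^j (\x^{j-1}\x^{j+1} - \x^{j-2}\x^{j-1})$ telescopes under the cyclic index convention $\x^{-1}=\x^{J-1}$, $\x^{0}=\x^{J}$, $\x^{J+1}=\x^{1}$.

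Once the nonlinear term is eliminated, we obtain the differential inequality
\begin{align*}
    \tfrac{1}{2}\tfrac{d}{dt} E(t) = -E(t) + \bx(t) \cdot \bm{f} \le -E(t) + \tfrac{1}{2} E(t) + \tfrac{1}{2}\norm{\bm{f}}^2,
\end{align*}
by Cauchy--Schwarz and Young's inequality. Noting $\norm{\bm{f}}^2 = J F^2$, this simplifies to $\tfrac{d}{dt} E(t) \le -E(t) + J F^2$. Gronwall's lemma then gives
\begin{align*}
    E(t) \le J F^2 + e^{-t}\bigl(E(0) - J F^2\bigr)_+.
\end{align*}

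Finally, to land inside $B(\rho)$ with $\rho^2 = 2 J F^2$, it suffices to require $e^{-T} (E(0) - J F^2)_+ \le J F^2$. Thus setting
\begin{align*}
    T(\norm{\bx_0}) \coloneq \max\!\left\{0,\; \log\!\left(\frac{\norm{\bx_0}^2 - J F^2}{J F^2}\right)\right\}
\end{align*}
yields $\norm{\bx(t)} \le \rho$ for all $t \ge T$, as claimed. The only delicate point in the argument is the telescoping identity $\bx \cdot \B(\bx,\bx) = 0$; the remainder is a routine dissipative energy estimate.
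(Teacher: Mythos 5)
Your proof is correct and follows the standard dissipative energy estimate: the cancellation $\bx\cdot\B(\bx,\bx)=0$ under the cyclic indexing, Young's inequality on $\bx\cdot\bm{f}$, and Gronwall, which is precisely the argument in the cited reference \cite{lawFilterAccuracyLorenz2016} (the paper itself states this lemma without proof). The only cosmetic point is that the positive part $(\cdot)_+$ should also appear inside the logarithm in your formula for $T(\norm{\bx_0})$ so that it is well defined when $\norm{\bx_0}^2\le JF^2$.
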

\begin{lemma}
    \label{lem:l96_chaos}
    Let $ \bx $ and $ \bz $ be two solutions to~\eqref{eq:l96_dissipative} with $ \bx_0 \in B(\rho)$ and $ \bz_0 \in \R^J $, respectively.
    Then, there exists $ \beta \in \R$ such that
    \begin{align}
        \norm{\bx - \bz}^2 \le \norm{\bx_0 - \bz_0}^2 e^{2\beta t}.
    \end{align}
\end{lemma}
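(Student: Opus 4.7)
The plan is to derive a differential inequality for $\norm{\bx - \bz}^2$ and then apply Gronwall. Set $\bm{e} = \bx - \bz$. Subtracting the two instances of \eqref{eq:l96_dissipative}, I get
\begin{align*}
    \frac{d\bm{e}}{dt} + \A\bm{e} + \B(\bx,\bx) - \B(\bz,\bz) = 0.
\end{align*}
Since $\B$ is symmetric and bilinear, writing $\bz = \bx - \bm{e}$ and expanding gives the algebraic identity
\begin{align*}
    \B(\bx,\bx) - \B(\bz,\bz) = 2\B(\bx, \bm{e}) - \B(\bm{e}, \bm{e}).
\end{align*}
Taking the inner product with $\bm{e}$ and using $\A = I_J$, I obtain
\begin{align*}
    \frac{1}{2}\frac{d}{dt}\norm{\bm{e}}^2 = -\norm{\bm{e}}^2 - 2 \bm{e}\cdot\B(\bx,\bm{e}) + \bm{e}\cdot\B(\bm{e},\bm{e}).
\end{align*}

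The key step is exploiting the \emph{energy conservation} of the Lorenz 96 nonlinearity: a direct calculation (re-indexing the cyclic sums $\sum_j \x^j \x^{j-1} \x^{j+1}$ and $\sum_j \x^j \x^{j-2}\x^{j-1}$) shows that $\bx \cdot \B(\bx,\bx) = 0$ for every $\bx \in \R^J$, and in particular $\bm{e}\cdot\B(\bm{e},\bm{e}) = 0$. For the remaining trilinear term, I would note that each component of $\B(\bx, \bm{e})$ is a sum of at most four products of one entry of $\bx$ with one entry of $\bm{e}$, so there is a constant $c = c(J)$ with $|\bm{e}\cdot\B(\bx,\bm{e})| \le c \norm{\bx}\norm{\bm{e}}^2$.

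To finish, I need $\norm{\bx(t)}$ bounded uniformly in $t$. The hypothesis $\bx_0 \in B(\rho)$, together with the standard energy estimate $\tfrac{1}{2}\tfrac{d}{dt}\norm{\bx}^2 = -\norm{\bx}^2 + \bx \cdot \bm{f}$ evaluated on $\partial B(\rho)$ (using $\rho^2 = 2J F^2$ and $\norm{\bm{f}}^2 = J F^2$), shows that $B(\rho)$ is positively invariant, so $\norm{\bx(t)} \le \rho$ for all $t \ge 0$; this is essentially contained in \Cref{lem:l96_absorbing}. Combining gives
\begin{align*}
    \frac{d}{dt}\norm{\bm{e}}^2 \le (4 c \rho - 2)\norm{\bm{e}}^2 = 2\beta\, \norm{\bm{e}}^2,
\end{align*}
with $\beta = 2c\rho - 1 \in \R$, and Gronwall's inequality yields the claimed bound.

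The main obstacle is the trilinear estimate: everything hinges on observing the cancellation $\bm{e}\cdot\B(\bm{e},\bm{e}) = 0$, because without it one is forced to control $\norm{\bz(t)}$, which is only eventually bounded and whose bound would depend on $\bz_0$ in a way that does not fit cleanly into a Gronwall statement. Once this cancellation is identified, reducing the problem to the $\bx$-dependent term (which is bounded by $\rho$ via invariance) is what allows $\beta$ to be chosen independently of the trajectory $\bz$.
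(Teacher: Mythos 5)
Your proof is correct and follows the standard argument for this lemma (the paper itself imports it from \cite{lawFilterAccuracyLorenz2016} without reproving it, and the same ingredients---the error equation, the bilinear estimate from Proposition~4.3 of that reference, and the energy cancellation $\bracket{\B(\bm{e},\bm{e})}{\bm{e}}=0$---are exactly what the paper reuses in its proof of \Cref{lem:pred_bound}). You also correctly identify why the cancellation of the cubic term is essential, namely that it lets $\beta$ depend only on $\rho$ even though $\bz_0$ is arbitrary, which is precisely why the lemma's hypotheses are asymmetric in $\bx_0$ and $\bz_0$.
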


\subsection{The ensemble Kalman filter}
For an ensemble size $ \Ne \in \N $, the ensemble Kalman filter (EnKF) approximates the filtering distribution by the empirical distribution of an ensemble of particles $ \Ze_n = [\bz^{(1)}_n, \dots, \bz^{(N)}_n] \in \R^{J \times \Ne} $, $ \Prob^{\bz_n}(\wc|Y_n) \approx \frac{1}{\Ne} \sum_{k=1}^{\Ne} \delta_{\bz^{(k)}_n}(\wc) $, where $ \delta_a(\wc) $ denotes the Dirac measure. 
In the prediction step, each particle is driven by the dynamics~\cref{eq:dynamics_disc}.
In the analysis step, the ensemble is corrected using the observation data based on the least squares.
A detailed description is given below.
A simple and stochastic implementation of the EnKF is known as the perturbed observation (PO) method~\cite{burgersAnalysisSchemeEnsemble1998}.
\begin{definition}
  \label{def:po}
 Let $ \Ze_0 = [\bz^{(k)}_0]_{k=1}^{\Ne} \in \R^{J \times \Ne} $.
 The algorithm of the perturbed observation (PO) method consists of the following two steps.
 For $ n \in \N $,
  \begin{enumerate}[label=(\Roman*)]
  \item {\label{def:po:1} (Prediction: $ \Ze_{n-1}  \rightarrow \Zehat_n $)
  Compute
      \begin{align}
         \bzf_n^{(k)} = \Mdyn(\bz_{n-1}^{(k)}), \quad k = 1, \dots, \Ne,
      \end{align}
 and set $ \Zehat_n = [\bzf_n^{(k)}]_{k=1}^{\Ne} \in \R^{J \times \Ne} $.
 }
   \item {\label{def:po:2} (Analysis: $ \Zehat_n, \by_n \rightarrow \Ze_n $) 
   Set $ \Pf_n = \cov(\Zehat_n) $ and replicate observations by adding random perturbations,
    \begin{align}
      \by_n^{(k)} = \by_n + \bxi_n^{(k)}, \quad \bxi_n^{(k)} \sim \mathcal{N}(0, R), \quad k = 1, \dots, \Ne,
   \end{align}
   and update the ensemble,
   \begin{align}
      \label{eq:po:update}
      \bz_n^{(k)} = \bzf_n^{(k)} + K_n(\by_n^{(k)} - \Hobs \zf_n^{(k)}), \quad k = 1, \dots, \Ne,
   \end{align} 
   with the Kalman gain 
   \begin{align}
      \label{eq:kalman_gain_po}
      K_n = \Pf_n\Hobs^\top (\Hobs\Pf_n\Hobs^\top + \Robs)^{-1}.
   \end{align}
 Finally, set $ \Ze_n = [\bz_n^{(k)}]_{k=1}^{\Ne} \in \R^{J \times \Ne} $.
 For given $ \by_n, \Pf_n $, we denote the map from $ \Zehat_n $ to $ \Ze_n $ as $ \Ze_n = \Ze_{PO}(\Zehat_n; \by_n, \Pf_n) $.
 }
  \end{enumerate}
\end{definition}



We introduce numerical techniques to stabilize the PO method, known as covariance inflation methods.
\begin{definition}
    \label{def:inflation_proj}
    Let $ \alpha > 0$ be an inflation parameter.
    In the analysis step~\ref{def:po:2} of~\Cref{def:po}, we first compute the inflated covariance
    \begin{align}
        \label{eq:inflated_covariance}
        \Pf_n^{\alpha} = \Pf_n + \alpha^2 I_{J} \in \R^{J \times J}
    \end{align}
    or the inflated and projected covariance
    \begin{align}
        \label{eq:inflated_projected_covariance}
        \Pf_n^{\alpha} = \Proj(\Pf_n + \alpha^2 I_{J}) \Proj \in \R^{J \times J}.
    \end{align}
    Then, we compute the analysis ensemble as $ \Ze_n = \Ze_{PO}(\Zehat_n; \by_n, \Pf_n^{\alpha}) $.
\end{definition}
Using~\eqref{eq:inflated_projected_covariance} implies that we ignore correlations between observed and unobserved subspaces, and we correct the error only in the observed subspace in the analysis step.

\section{Error analysis}
\label{sec:results}

Let $ \err_n^{(k)} = \bz_n^{(k)} - \bx_n $.
We define another norm $ \pnorm{\bz} = \sqrt{\norm{\bz}^2 + \norm{\Proj \bz}^2} $ associated with $ \Proj $ for $ \bz \in \R^J $.

\subsection{Error bound with the covariance projection}
We derive the error bound of the PO method with the projected covariance inflation.
\begin{theorem}[Error bound of the projected PO method]
    \label{thm:error_bound}
    Let us consider the Lorenz 96 model~\eqref{eq:l96_dissipative}, the observation matrix~\eqref{eq:l96_observation}, and the PO method with the projected covariance inflation~\eqref{eq:inflated_projected_covariance}.
    Suppose that $ \Robs = r^2 I_{2J'} $ for $ r > 0 $, and $ \bz^{(k)}_n \in B(\rho) $ for $ k = 1, \dots, \Ne $ and $ n \in \N $.
    Then, there exist $ \tau > 0 $, $ \alpha > 0 $, and $ \theta \in (0, 1) $ such that
    \begin{align}
        \label{eq:error_bound}
        \E[\pnorm{\err^{(k)}_n}^2] \le \theta^n \E[\pnorm{\err^{(k)}_0}^2] + 4 N r^2 \frac{1 - \theta^n}{1 - \theta}
    \end{align}
    for $ k = 1, \dots, \Ne $ and $ n \in \N $, where $ N = \min\{\Ne-1, \Ny\} $ and $ \Ny = 2J'$.
    Furthermore, it follows that
    \begin{align}
        \label{eq:error_bound_limit}
        \limsup_{n\rightarrow\infty}\E[\pnorm{\err^{(k)}_n}^2] \le \frac{4 N r^2}{1 - \theta}.
    \end{align}
\end{theorem}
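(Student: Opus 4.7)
The plan is to derive a one-step recursion of the form $\E[\pnorm{\err^{(k)}_n}^2] \le \theta\E[\pnorm{\err^{(k)}_{n-1}}^2] + 4N\robs^2$ and iterate. From \eqref{eq:po:update}, the error satisfies $\err^{(k)}_n = (I_J-K_n\Hobs)\errhat^{(k)}_n + K_n(\bxi_n+\bxi^{(k)}_n)$ with $\errhat^{(k)}_n := \Mdyn(\bz^{(k)}_{n-1}) - \Mdyn(\bx_{n-1})$. The projected inflation \eqref{eq:inflated_projected_covariance} forces $\ran(K_n) \subset \ran(\Proj)$, so $(I_J-\Proj)\err^{(k)}_n = (I_J-\Proj)\errhat^{(k)}_n$: analysis leaves the unobserved components untouched. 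For the observed part, setting $\Pftilde_n := \Hobs\Pf_n\Hobs^\top + \alpha^2 I_{\Ny}$ and $W_n := \Pftilde_n(\Pftilde_n + \robs^2 I_{\Ny})^{-1}$, the identities $\Hobs\Proj = \Hobs$ and $\Hobs\Hobs^\top = I_{\Ny}$ give $\Proj\err^{(k)}_n = \Hobs^\top(I_{\Ny}-W_n)\Hobs\errhat^{(k)}_n + \Hobs^\top W_n(\bxi_n+\bxi^{(k)}_n)$. Since $I_{\Ny}-W_n \preceq \gamma I_{\Ny}$ with $\gamma := \robs^2/(\alpha^2+\robs^2) < 1$, Young's inequality yields
\begin{equation*}
\norm{\Proj\err^{(k)}_n}^2 \le (1+\varepsilon)\gamma^2\norm{\Proj\errhat^{(k)}_n}^2 + (1+\tfrac{1}{\varepsilon})\norm{K_n(\bxi_n+\bxi^{(k)}_n)}^2.
\end{equation*}

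The prediction step exploits the Lorenz 96 structure induced by \eqref{eq:l96_observation}: each unobserved index $j \in \{3,6,\ldots,3J'\}$ satisfies $\dot u^j + u^j = (u^{j+1}-u^{j-2})u^{j-1} + F$, whose bilinear term couples only to observed neighbours. Subtracting the equations for $\bz^{(k)}$ and $\bx$ and applying Gronwall on $[0,h]$, using $\bz^{(k)}_{n-1},\bx_{n-1} \in B(\rho)$ to bound the coefficients, yields
\begin{equation*}
\norm{(I_J-\Proj)\errhat^{(k)}_n}^2 \le e^{-2h}\norm{(I_J-\Proj)\err^{(k)}_{n-1}}^2 + C_1(h,\rho)\norm{\Proj\err^{(k)}_{n-1}}^2,
\end{equation*}
with $C_1(h,\rho) = O(h\rho^2)$ as $h \to 0$. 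For the observed component, the coarser chaos bound of \Cref{lem:l96_chaos} suffices: $\norm{\Proj\errhat^{(k)}_n}^2 \le e^{2\beta h}\norm{\err^{(k)}_{n-1}}^2$.

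Inserting these into the analysis bound and expanding $\pnorm{\err^{(k)}_n}^2 = \norm{(I_J-\Proj)\err^{(k)}_n}^2 + 2\norm{\Proj\err^{(k)}_n}^2$, the coefficients of $\norm{(I_J-\Proj)\err^{(k)}_{n-1}}^2$ and $\norm{\Proj\err^{(k)}_{n-1}}^2$ become $e^{-2h} + 2(1+\varepsilon)\gamma^2 e^{2\beta h}$ and $C_1(h,\rho) + 2(1+\varepsilon)\gamma^2 e^{2\beta h}$, respectively. Picking $\tau$ small and $\alpha$ so that $\gamma^2 \lesssim \tau$ makes both coefficients bounded by $\theta$ times the respective weights $1$ and $2$, for some $\theta \in (0,1)$. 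Conditioning on $\Pf_n$, the noise contribution is $\E[\norm{K_n(\bxi_n+\bxi^{(k)}_n)}^2] = 2\robs^2\trace{W_n^2}$, and $\trace{W_n^2} \le N$ follows from $\rank(\Hobs\Pf_n\Hobs^\top) \le \Ne-1$ together with a small-$\alpha$ calibration that keeps the residual eigenvalues $\alpha^2/(\alpha^2+\robs^2)$ of $W_n$ negligible. Taking $\varepsilon = 1$ and expectations gives the recursion $\E[\pnorm{\err^{(k)}_n}^2] \le \theta\E[\pnorm{\err^{(k)}_{n-1}}^2] + 4N\robs^2$; iteration yields \eqref{eq:error_bound} and \eqref{eq:error_bound_limit}.

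The hardest part is the Gronwall step for the unobserved components, which works only because the 2-in-3 observation pattern decouples unobserved-from-unobserved in $\B$, turning each scalar error equation $\dot e^j + e^j = (\text{bilinear in }\Proj\err, \bx, \bz)$ into one with genuine dissipation and forcing controlled purely by the observed error; this is the Lorenz 96 structural input that makes the stochastic filter amenable to analysis, in the spirit of \cite{lawFilterAccuracyLorenz2016}. A secondary tension is the joint calibration of $\tau$ and $\alpha$: $\alpha$ must be large enough that $\gamma^2 \lesssim \tau$ (contraction) but small enough that the trivial eigenvalues of $\Pftilde_n$ do not inflate $\trace{W_n^2}$ past $\min(\Ne-1,\Ny)$, while $\tau$ must be small enough that $e^{-2h} \approx 1-2\tau$ and $e^{2\beta h} \approx 1$.
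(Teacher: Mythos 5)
Your overall architecture---a one-step recursion for $\E[\pnorm{\err_n^{(k)}}^2]$ obtained by composing a prediction estimate (dissipation in the unobserved directions forced only by the observed error, via the 2-in-3 observation pattern) with an analysis estimate (contraction by $\gamma^2=(\robs^2/(\robs^2+\alpha^2))^2$ in the observed directions, no change elsewhere)---is exactly the paper's strategy (\Cref{lem:pred_bound} and \Cref{lem:analysis_bound}). But two steps as you wrote them do not deliver the stated bound. First, the noise constant: you split the analysis error by Young's inequality with weight $(1+1/\varepsilon)$ on the noise, and with $\varepsilon=1$ and the weight $2$ carried by $\norm{\Proj\err_n^{(k)}}^2$ inside $\pnorm{\wc}^2$ your noise contribution is $4\,\E[\norm{K_n(\bxi_n+\bxi_n^{(k)})}^2]\le 8N\robs^2$, not $4N\robs^2$. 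The paper avoids this loss: $\bxi_n,\bxi_n^{(k)}$ are mean-zero and independent of the forecast, so the cross term vanishes under conditional expectation and no Young's inequality is needed. Second, and more seriously, the bound $\trace{W_n^2}\le N=\min\{\Ne-1,\Ny\}$ cannot be rescued by the ``small-$\alpha$ calibration'' you invoke: every eigenvalue of $W_n=\Pftilde_n(\Pftilde_n+\robs^2 I_{\Ny})^{-1}$ is at least $\alpha^2/(\alpha^2+\robs^2)=1-\gamma$, so $\trace{W_n^2}\ge \Ny(1-\gamma)^2$, while the contraction requirement $\gamma^2\lesssim\tau$ forces $\gamma$ small and hence $\trace{W_n^2}$ close to $\Ny$. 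When $\Ne-1<\Ny$ the two calibrations you list in your closing paragraph are mutually exclusive, and your argument only yields $N=\Ny$. (In fairness, the paper's own proof asserts $\rank(\Pi_n)\le\min\{\Ne-1,2J'\}$ for the projection onto $\ran(\Pf_n^\alpha)$, which suffers from the same defect because the additive term $\alpha^2\Proj$ makes that range all of $\Proj(\R^J)$; but you cannot simultaneously claim the $\min$ and concede that the calibration needed to justify it is unavailable.)

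A smaller point: your Gronwall bound $\norm{(I_J-\Proj)\errhat_n^{(k)}}^2\le e^{-2h}\norm{(I_J-\Proj)\err_{n-1}^{(k)}}^2+C_1(h,\rho)\norm{\Proj\err_{n-1}^{(k)}}^2$ requires controlling $\norm{\Proj\err(s)}$ for $s\in[0,h]$ inside the integral, which is precisely the paper's estimate \eqref{eq:pred_bound_proj}; using it reintroduces a small, $O(h^2)$, contribution of the unobserved initial error into the coupling term. That is absorbable. The two defects above are the ones to repair: replace Young's inequality by the vanishing-cross-term argument to recover the constant $4N\robs^2$, and either prove the rank/trace bound honestly or state the result with $N=\Ny$.
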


To prove~\Cref{thm:error_bound}, we estimate the error growth in the prediction step by the following lemma.
\begin{lemma}
    \label{lem:pred_bound}
    Under the conditions in~\Cref{thm:error_bound}, let $ \err(t) = \Mdyn_t(\bz_0) - \Mdyn_t(\bx_0) $ for any $ \bx_0, \bz_0 \in B(\rho) $ and $ t \ge 0 $.
    Then, we have
    \begin{align}
        \norm{\Proj \err(t)}^2 & \le a_1(t)\norm{\err(0)}^2 + \norm{\Proj \err(0)}^2 \label{eq:pred_bound_proj}\\
        \norm{\err(t)}^2 & \le b_1(t)\norm{\err(0)}^2 + b_2(t)\norm{\Proj \err(0)}^2 \label{eq:pred_bound}
    \end{align}
    for $ t \ge 0 $, where
    \begin{align}
        a_1(t) & = \frac{16\rho^2}{\beta} (e^{2\beta t} - 1), \label{eq:a1} \\
        b_1(t) & = \frac{16c^2\rho^4}{\beta} \left[\frac{1}{2\beta + 1}(e^{2\beta t} - e^{-t}) - (1 - e^{-t})\right] + e^{-t}, \label{eq:b1} \\
        b_2(t) & = c^2 \rho^2 (1 - e^{-t}), \label{eq:b2} 
    \end{align}
    and $ c > 0 $.
    Furthermore, taking $ t > 0 $ small enough, we obtain
    \begin{align}
        \label{eq:pred_contraction}
        a_1(t), b_1(t), b_2(t) < 1.
    \end{align}
\end{lemma}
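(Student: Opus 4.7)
The plan is to derive both estimates from energy-type differential inequalities for $|\Proj\err(t)|^2$ and $|\err(t)|^2$, and then integrate using the exponential bound of \Cref{lem:l96_chaos}. Subtracting \eqref{eq:l96_dissipative} for $\bx$ and $\bz$ yields the error equation
\[
\frac{d\err}{dt} + \err + \B(\err,\err) + \B(\err,\bx) + \B(\bx,\err) = 0,
\]
and a direct inspection of the definition of $\B$ shows that it is symmetric, $\B(\bx,\bz) = \B(\bz,\bx)$, and satisfies the energy identity $\bw \cdot \B(\bw,\bw) = 0$ for every $\bw$. These two properties are the workhorses: they eliminate $\err \cdot \B(\err,\err)$ in any energy estimate and collapse $\B(\err,\bx) + \B(\bx,\err)$ into $2\B(\bx,\err)$, so every nonlinear forcing reduces to a multiple of $\err \cdot \B(\bx,\err)$. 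I will also use the pointwise bound $|\B(\bx,\bz)| \le c_0|\bx||\bz|$, which is clear from the formula for $\B$, and the a priori bound $|\err(t)| \le 2\rho$ that follows from $\bx(t),\bz(t) \in B(\rho)$.

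For \eqref{eq:pred_bound_proj}, I take the inner product of the error equation with $\Proj\err$. Since $\Proj\err \cdot \err = |\Proj\err|^2$, Cauchy--Schwarz and the bounds above produce a differential inequality of the form $\frac{d}{dt}|\Proj\err|^2 \le 32\rho^2|\err|^2$ after routine Young's-type manipulations. Substituting $|\err(s)|^2 \le e^{2\beta s}|\err(0)|^2$ from \Cref{lem:l96_chaos} and integrating from $0$ to $t$ yields \eqref{eq:pred_bound_proj} with $a_1$ as in \eqref{eq:a1}.

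For \eqref{eq:pred_bound}, taking the inner product with $\err$ itself and using $\err \cdot \B(\err,\err) = 0$ together with symmetry reduces the energy identity to $\frac{d}{dt}|\err|^2 + 2|\err|^2 = -4\err \cdot \B(\bx,\err)$. The crucial step, and the main obstacle, is the mixed bound
\[
|4\err \cdot \B(\bx,\err)| \le |\err|^2 + c^2\rho^2|\Proj\err|^2,
\]
which preserves an $|\err|^2$ dissipation on the left and produces a forcing driven purely by $|\Proj\err|^2$. This estimate exploits the specific observation pattern \eqref{eq:l96_projection}: for every unobserved index $i = 3j$, the three indices $3j-2, 3j-1, 3j+1$ entering $\B(\bx,\err)^{3j}$ are all observed, so $\B(\bx,\err)^{3j}$ contains only $\Proj\err$ factors. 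Splitting $\err \cdot \B(\bx,\err) = \Proj\err \cdot \B(\bx,\err) + (I-\Proj)\err \cdot \B(\bx,\err)$ and combining this structural observation on the second piece with the coarse bound $|\B(\bx,\err)| \le c_0\rho|\err|$ on the first yields $|\err \cdot \B(\bx,\err)| \le C\rho\,|\err||\Proj\err|$, and a single application of Young's inequality delivers the displayed bound.

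With the resulting inequality $\frac{d}{dt}|\err|^2 + |\err|^2 \le c^2\rho^2|\Proj\err|^2$ in hand, variation of constants gives $|\err(t)|^2 \le e^{-t}|\err(0)|^2 + c^2\rho^2\int_0^t e^{-(t-s)}|\Proj\err(s)|^2\,ds$. Substituting \eqref{eq:pred_bound_proj} inside the integral and evaluating the elementary integral $\int_0^t e^{-(t-s)}(e^{2\beta s}-1)\,ds = \frac{e^{2\beta t}-e^{-t}}{2\beta+1} - (1-e^{-t})$ produces precisely the bracketed factor in $b_1(t)$; the leading $e^{-t}$ comes from the decaying initial contribution, and $b_2(t) = c^2\rho^2(1-e^{-t})$ arises from the $|\Proj\err(0)|^2$ piece. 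Finally, \eqref{eq:pred_contraction} follows from the Taylor expansions $a_1(t), b_2(t) = O(t)$ and $b_1(t) = 1 - t + O(t^2)$ near $t = 0$, so all three quantities drop strictly below $1$ on a right-neighborhood of the origin.
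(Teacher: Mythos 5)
Your proposal is correct and follows essentially the same route as the paper: an energy estimate for $\norm{\Proj\err}^2$ integrated against the chaos bound of \Cref{lem:l96_chaos} to get $a_1$, then an energy estimate for $\norm{\err}^2$ driven by the projected trilinear bound $\bracket{\B(\bx,\err)}{\err} \le C\rho\norm{\err}\norm{\Proj\err}$, Gronwall with the first bound substituted in, and the sign of $b_1'(0)$ for \eqref{eq:pred_contraction}. The only difference is cosmetic: you re-derive that trilinear estimate from the index structure of the observation pattern (correctly --- for $i \equiv 0 \bmod 3$ the indices $i-2, i-1, i+1$ are all observed), whereas the paper simply cites Proposition~4.3 of \cite{lawFilterAccuracyLorenz2016}.
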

\begin{proof}
    The proof follows a similar way to~\cite{lawFilterAccuracyLorenz2016}.
    For $ t \in [0, \tau) $, subtracting~\eqref{eq:l96_dissipative} with $ \x $ from one with $ \z $ yields
    \begin{align}
        \label{eq:l96_err}
        \frac{d\err}{dt} + \err + 2\B(\bx, \err) + \B(\err, \err) = 0.
    \end{align}
    By taking the inner product of both sides with $ \Proj\err$, we have
    \begin{align*}
        \frac{1}{2} \frac{d}{dt}\norm{\Proj \err}^2 + \norm{\Proj \err}^2 + 2 \bracket{\B(\bx,\err)}{\Proj\err} + \bracket{\B(\err,\err)}{\Proj\err} = 0.
    \end{align*}
    By using the inequality $ \norm{\B(\bx,\widetilde{\bx})} \le 2 \norm{\bx}\norm{\widetilde{\bx}}$ for any $ \bx, \widetilde{\bx} \in \R^J $ (see Proposition~4.3 in~\cite{lawFilterAccuracyLorenz2016}), we obtain
    \begin{align*}
        \frac{1}{2} \frac{d}{dt}\norm{\Proj \err}^2 + \norm{\Proj \err}^2
        & \le 4 \norm{\x}\norm{\err}\norm{\Proj\err} + 2\norm{\err}^2 \norm{\Proj\err} \\
        & \le 4 \rho\norm{\err}\norm{\Proj\err} + 4\rho \norm{\err} \norm{\Proj\err} = 8 \rho\norm{\err}\norm{\Proj\err} \\
        & \le 16\rho^2\norm{\err}^2 + \norm{\Proj\err}^2.
    \end{align*}
    Here, the second inequality follows from $ \bx, \bz \in B(\rho) $, and the third inequality follows from the elemental inequality $ ab \le \frac{1}{2}(a^2 + b^2) $ for $ a = 4\rho\norm{\err}, b = \norm{\Proj\err}$.
    Therefore, we have
    \begin{align*}
        \frac{d}{dt}\norm{\Proj\err}^2 \le 32 \rho^2 \norm{\err}^2 \le 32 \rho^2 e^{2\beta t}\norm{\err(0)}^2.
    \end{align*}
    The last inequality follows from~\Cref{lem:l96_chaos}.
    Integrating both sides from $ 0 $ to $ t $ gives
    \begin{align*}
        \norm{\Proj\err}^2 \le \frac{16\rho^2}{\beta}(e^{2\beta t} - 1) \norm{\err(0)}^2 + \norm{\Proj\err(0)}^2.
    \end{align*}
    Hence, $ a_1 $ is given by~\eqref{eq:a1}.
    To derive~\eqref{eq:pred_bound}, we need to utilize the property that there is a constant $ c > 0 $ such that $ \bracket{\B({\bx}, \bx)}{\widetilde{\x}} \le c \norm{\bx}\norm{\widetilde{\bx}}\norm{\Proj\bx}$ for any $ \bx, \widetilde{\bx} \in \R^J $ (Proposition~4.3 in~\cite{lawFilterAccuracyLorenz2016}).
    Taking the inner product of~\eqref{eq:l96_err} with $ \err$ and using this property yield
    \begin{align*}
        \frac{1}{2}\frac{d}{dt}\norm{\err}^2 + \norm{\err}^2 
        & \le c\norm{\x}\norm{\err}\norm{\Proj\err} \le c\rho\norm{\err}\norm{\Proj\err} \le \frac{\norm{\err}^2}{2} + \frac{c^2\rho^2}{2}\norm{\Proj\err}^2 \\
        & \le \frac{\norm{\err}^2}{2} + \frac{c^2\rho^2}{2}\left(a_1(t)\norm{\err(0)}^2 + \norm{\Proj \err(0)}^2\right).
    \end{align*}
    Here, the second inequality follows from $ \bx \in B(\rho) $, and the third inequality from the inequality $ ab \le \frac{1}{2}(a^2 + b^2) $ for $ a = \norm{\err}, b = c\rho\norm{\Proj\err}$.
    Hence, we have 
    \begin{align*}
        \frac{d}{dt}\norm{\err}^2 + \norm{\err}^2 \le {c^2\rho^2}\left(a_1(t)\norm{\err(0)}^2 + \norm{\Proj \err(0)}^2\right)
    \end{align*}
    and applying the Gronwall lemma gives
    \begin{align*}
        \norm{\err(t)}^2
        &\le e^{-t}\norm{\err(0)}^2 + c^2 \rho^2 \norm{\err(0)}^2 e^{-t} \int_0^t e^s a_1(s)ds + c^2 \rho^2 (1 - e^{-t})\norm{\Proj \err(0)}^2 \\
        & = \left(\frac{16c^2\rho^4}{\beta} \left[\frac{1}{2\beta + 1}(e^{2\beta t} - e^{-t}) - (1 - e^{-t})\right] + e^{-t}\right)\norm{\err(0)}^2 + c^2 \rho^2 (1 - e^{-t})\norm{\Proj\err(0)}^2.
    \end{align*}
    Therefore, $ b_1 $ and $ b_2 $ are given by~\eqref{eq:b1} and~\eqref{eq:b2}, respectively.
    To obtain~\eqref{eq:pred_contraction}, it is easy for $ a_1 $ and $ b_2 $.
    By differentiating $ b_1(t) $, we have
    \begin{align*}
        \left.\frac{d}{dt}b_1(t) \right|_{t=0} 
        & = \left.\frac{16c^2\rho^4}{\beta} \left[\frac{1}{2\beta + 1}(2\beta e^{2\beta t} + e^{-t}) - e^{-t}\right] - e^{-t} \right|_{t=0} \\
        &= \frac{16c^2\rho^4}{\beta} \left(\frac{2\beta+1}{2\beta + 1} -1\right) - 1 = -1 < 0.
    \end{align*}
    Since $ b_1(0) = 1$, there is $ t > 0 $ such that $ b_1(t) < 1$.
\end{proof}

We estimate the error contraction in the analysis step by the following lemma.
\begin{lemma}
    \label{lem:analysis_bound}
    Under the conditions in~\Cref{thm:error_bound}, let $ \Theta = \left(\frac{\robs^2}{\robs^2 + \alpha^2}\right)^2 $.
    Then, we have
    \begin{align}
        \E[\norm{\Proj \err^{(k)}_n}^2] & \le \Theta(\alpha) \E[\norm{\Proj \errhat^{(k)}_n}^2] + 2 N r^2 \label{eq:analysis_bound_proj} \\
        \E[\norm{\err^{(k)}_n}^2] & \le \E[\norm{\errhat^{(k)}_n}^2] + 2 N r^2 \label{eq:analysis_bound}
    \end{align}
    for $ k = 1, \dots, \Ne $, and $ n \in \N $ where $ N = \min\{\Ne-1, \Ny\} $ and $ \Ny = 2J'$.
\end{lemma}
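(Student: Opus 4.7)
The plan is to set up a linear recursion for the analysis error, exploit that $K_n$ has range in $\ran(\Proj)$ to decouple observed and unobserved components, and then bound each piece by a spectral analysis of $\Hobs K_n$.

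First I substitute $\by_n^{(k)} = \Hobs\bx_n + \bxi_n + \bxi_n^{(k)}$ into \eqref{eq:po:update} and subtract $\bx_n$ to obtain
\begin{equation*}
    \err_n^{(k)} = (I_{J} - K_n\Hobs)\errhat_n^{(k)} + K_n(\bxi_n + \bxi_n^{(k)}).
\end{equation*}
Because $\errhat_n^{(k)}$ and $\Pf_n$ are measurable with respect to the history prior to step $n$, while $\bxi_n + \bxi_n^{(k)}$ is independent with covariance $2\robs^2 I_{\Ny}$, the cross term vanishes in conditional expectation. Using the identities $\Hobs\Proj = \Hobs$ and $\Proj\Hobs^\top = \Hobs^\top$, which follow from the rows of $\Hobs$ being distinct standard basis vectors, the projected inflated gain satisfies $\ran K_n \subset \ran\Proj$. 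Hence $(I_{J}-\Proj)K_n = 0$ and $(I_{J}-\Proj)(I_{J}-K_n\Hobs) = I_{J}-\Proj$, so $(I_{J}-\Proj)\err_n^{(k)} = (I_{J}-\Proj)\errhat_n^{(k)}$: the unobserved component is preserved exactly. By the Pythagorean decomposition $\norm{\bz}^2 = \norm{\Proj\bz}^2 + \norm{(I_{J}-\Proj)\bz}^2$, the bound \eqref{eq:analysis_bound} follows directly from \eqref{eq:analysis_bound_proj}.

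Next, since $\Hobs$ is an isometry from $\ran\Proj$ onto $\R^{\Ny}$, applying $\Hobs$ to the recursion reduces the projected estimate to one on $\R^{\Ny}$. Direct computation with $S := \Hobs\Pf_n\Hobs^\top \succeq 0$ yields
\begin{equation*}
    \Hobs K_n = (S + \alpha^2 I_{\Ny})(S + (\alpha^2 + \robs^2) I_{\Ny})^{-1},
\end{equation*}
so $I_{\Ny} - \Hobs K_n = \robs^2 (S + (\alpha^2 + \robs^2)I_{\Ny})^{-1}$ has operator norm bounded by $\robs^2/(\alpha^2 + \robs^2) = \sqrt{\Theta}$. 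This produces the contraction factor $\Theta\,\norm{\Proj\errhat_n^{(k)}}^2$ for the deterministic part of $\norm{\Proj\err_n^{(k)}}^2 = \norm{\Hobs\err_n^{(k)}}^2$.

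Finally, the noise contribution is handled by a trace bound. Conditionally on the history,
\begin{equation*}
    \E\bigl[\norm{\Hobs K_n(\bxi_n + \bxi_n^{(k)})}^2\bigr] = 2\robs^2\trace\bigl((\Hobs K_n)(\Hobs K_n)^\top\bigr) = 2\robs^2\sum_{i=1}^{\Ny}\left(\frac{\lambda_i + \alpha^2}{\lambda_i + \alpha^2 + \robs^2}\right)^2,
\end{equation*}
where $\{\lambda_i\}_{i=1}^{\Ny}$ are the eigenvalues of $S$. Each summand is bounded by $1$, and since $\Pf_n$ is a sample covariance, at most $\rank(\Pf_n) \le \Ne - 1$ of the $\lambda_i$ are nonzero; the intended conclusion is that this sum is bounded by $N = \min\{\Ne - 1, \Ny\}$. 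This trace estimate is the main technical hurdle: the additive inflation $\alpha^2 I_{\Ny}$ makes $\Hobs K_n$ nominally full rank, so the argument must carefully split the spectrum into genuinely nonzero directions of $S$ and the inflation-only directions (where the entry of $\Hobs K_n$ reduces to the scalar $\alpha^2/(\alpha^2 + \robs^2)$) to extract the sharp count $N$. Combining this trace bound with the contraction factor $\Theta$ and the vanishing cross term yields \eqref{eq:analysis_bound_proj}, and hence \eqref{eq:analysis_bound}.
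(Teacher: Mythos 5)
Your route---writing the update in the explicit gain form $\err_n^{(k)} = (I_J - K_n\Hobs)\errhat_n^{(k)} + K_n(\bxi_n+\bxi_n^{(k)})$, decoupling the unobserved component via $\ran(K_n)\subset\ran(\Proj)$, and bounding $\opnorm{I_{\Ny}-\Hobs K_n}\le \robs^2/(\robs^2+\alpha^2)=\sqrt{\Theta}$---is mathematically equivalent to the paper's proof, which instead works with the implicit form $(I_J + r^{-2}\Pf_n^\alpha)\err_n^{(k)} = \errhat_n^{(k)} + r^{-2}\Pf_n^\alpha(\bxi_n+\bxi_n^{(k)})$ and bounds the inverse operators $\opnorm{(I_J+r^{-2}\Pf_n^\alpha)^{-1}}\le 1$ and $\opnormsub{(\Proj+r^{-2}\Pf_n^\alpha)^{-1}}{\Proj(\R^J)}\le (1+r^{-2}\alpha^2)^{-1}$. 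Your contraction factor $\Theta$, the vanishing cross term, the identity $\Hobs\Pf_n^\alpha\Hobs^\top = S+\alpha^2 I_{\Ny}$, and the deduction of \eqref{eq:analysis_bound} from the exact preservation of $(I_J-\Proj)\errhat_n^{(k)}$ plus the Pythagorean split are all correct.

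The gap is exactly where you flag it: the trace estimate for the noise term is asserted rather than proved, and the ``sharp count'' $N=\min\{\Ne-1,\Ny\}$ cannot in fact be extracted by the spectral split you describe. With $\alpha>0$ every eigenvalue of $\Hobs K_n$ equals $(\lambda_i+\alpha^2)/(\lambda_i+\alpha^2+\robs^2)\ge \alpha^2/(\alpha^2+\robs^2)>0$, so \emph{all} $\Ny$ directions contribute to $\trace{(\Hobs K_n)^\top\Hobs K_n}$, including the inflation-only ones; the honest bound obtainable this way is $\Ny$, not $\min\{\Ne-1,\Ny\}$. Concretely, if the forecast ensemble collapses so that $S=0$, the noise term equals $2\robs^2\Ny\bigl(\alpha^2/(\alpha^2+\robs^2)\bigr)^2$, which exceeds $2(\Ne-1)\robs^2$ whenever $\Ny\bigl(\alpha^2/(\alpha^2+\robs^2)\bigr)^2>\Ne-1$. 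For what it is worth, the paper's own proof has the same soft spot: it takes $\Pi_n$ to be the projection onto $\ran(\Pf_n^\alpha)$ and claims $\rank(\Pi_n)\le\min\{\Ne-1,2J'\}$, but the additive inflation makes $\Pf_n^\alpha=\Proj\Pf_n\Proj+\alpha^2\Proj$ full rank on $\ran(\Proj)$, so that rank is $2J'=\Ny$. As written, your argument therefore does not close this step, and only the weaker constant $2\Ny\robs^2$ in place of $2N\robs^2$ actually follows.
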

To obtain this, we need to rewrite the analysis update~\eqref{eq:po:update} using Proposition~3.2 in~\cite{kellyWellposednessAccuracyEnsemble2014} as
\begin{align}
    \label{eq:po:update2}
    (I_{J} + \Pf_n \Hobs^\top \Robs^{-1} \Hobs) \bz_n^{(k)} = \bzf_n^{(k)} + \Pf_n \Hobs^\top \Robs^{-1} \by_n^{(k)}
\end{align}
for $ k = 1, \dots, \Ne$.
\begin{proof}[proof of~\Cref{lem:analysis_bound}]
    From~\eqref{eq:po:update2} with $ \Pf_n = \Pf_n^\alpha $ and $ \Robs = r^2 I_{2J'}$, we have
    \begin{align*}
        (I_{J} + r^{-2} \Pf_n^\alpha) \bz_n^{(k)} = \bzf_n^{(k)} + r^{-2}\Pf_n^\alpha \by_n^{(k)}
    \end{align*}
    for $ k= 1, \dots, \Ne $.
    It is easy to see that
    \begin{align*}
         (I_{J} + r^{-2} \Pf_n^\alpha) \bx_n = \bx_n + r^{-2}\Pf_n^\alpha \Hobs \bx_n.
    \end{align*}
    Subtracting both sides yields
    \begin{align}
        \label{eq:lem2-err}
        (I_{J} + r^{-2} \Pf_n^\alpha) \err_n^{(k)} = \errhat_n^{(k)} + r^{-2}\Pf_n^\alpha(\bxi_n + \bxi_n^{(k)}),
    \end{align}
    where $ \errhat_n^{(k)} \coloneqq \bzf_n^{(k)} - \bx_n $ for $ k = 1, \dots, \Ne $.
    Since $ I_{J} + r^{-2}\Pf_n^\alpha \succ 0 $, let $ \err_n^{(k)} $ be divided as $ \err_n = \bepsilon_1^{(k)} + \bepsilon_2^{(k)}$:
    \begin{align*}
        \bepsilon_1^{(k)} = (I_{J} + r^{-2}\Pf_n^\alpha)^{-1} \errhat_n^{(k)}, \quad \bepsilon_2^{(k)} = (I_{J} + r^{-2}\Pf_n^\alpha)^{-1} r^{-2}\Pf_n^\alpha (\bxi_n + \bxi_n^{(k)}).
    \end{align*}
    Since $ \opnorm{(I_{J} + r^{-2}\Pf_n^\alpha)^{-1}} \le 1$, we have
    \begin{align}
        \label{eq:lem2-1}
        \norm{\bepsilon_1^{(k)}} \le \norm{\errhat_n^{(k)}}.
    \end{align}
    Let $ \Pi_n $ be a projection onto $ \ran(\Pf_n^\alpha) $, then $ \rank(\Pi_n) \le N \coloneqq \min\{\Ne-1, 2J'\} $.
    Owing to the symmetric property of $ r^{-2}\Pf_n^\alpha $, we have
    \begin{align}
        \label{eq:opnorm_bound_sym}
        \opnorm{(I_{J} + r^{-2} \Pf_n^\alpha)^{-1}r^{-2}\Pf_n^\alpha} \le 1
    \end{align}
    and
    \begin{align}
        \label{eq:lem2-2}
        \norm{\bepsilon_2^{(k)}} \le \opnorm{(I_{J} + r^{-2} \Pf_n^\alpha)^{-1}r^{-2}\Pf_n^\alpha} \norm{\Pi_n(\bxi_n + \bxi_n^{(k)})} \le \norm{\Pi_n(\bxi_n + \bxi_n^{(k)})}.
    \end{align}
    From~\eqref{eq:lem2-1},~\eqref{eq:lem2-2}, and the argument with the conditional expectation as in the proof of Theorem~2 in~\cite{takedaUniformErrorBounds2024}, we have
    \begin{align*}
        \E[\norm{\err_n^{(k)}}^2]
        \le \E[\norm{\errhat_n^{(k)}}^2] + \E[\norm{\Pi_n(\bxi_n + \bxi_n^{(k)})}^2] \le \E[\norm{\errhat_n^{(k)}}^2] + 2Nr^2.
    \end{align*}
    
    By applying $ \Proj $ to~\eqref{eq:lem2-err}, we have
    \begin{align}
        \label{eq:analysis_proj_proof}
        (\Proj + r^{-2}\Pf_n^\alpha)\Proj\err_n^{(k)} = \Proj\errhat_n^{(k)} + r^{-2}\Pf_n^\alpha(\bxi_n + \bxi_n^{(k)}).
    \end{align}
    By definition of $ \Pf_n^\alpha$, 
    \begin{align*}
        \Proj + r^{-2}\Pf_n^\alpha = \Proj + r^{-2}(\Pf_n + \alpha^2 \Proj) \succeq  \Proj + r^{-2} \alpha^2 \Proj = (1 + r^{-2}\alpha^2)\Proj.
    \end{align*}
    Hence, the operator norm of its inverse in the subspace $ \Proj(\R^{J})$ is bounded as
    \begin{align*}
        \opnormsub{(\Proj + r^{-2}\Pf_n^\alpha)^{-1}}{\Proj(\R^{J})} \le \frac{1}{1 + r^{-2}\alpha^2}.
    \end{align*}
    By a similar argument with that for $ \norm{\err_n^{(k)}}$, it follows that
    \begin{align*}
        \E[\norm{\Proj \err^{(k)}_n}^2] & \le \Theta \E[\norm{\Proj \errhat^{(k)}_n}^2] + 2 N r^2,
    \end{align*}
    where $ \Theta = \Theta(\alpha) = \left(\frac{r^2}{r^2 + \alpha^2}\right)^2$.
\end{proof}

\begin{proof}[Proof of~\Cref{thm:error_bound}]
    From~\Cref{lem:analysis_bound}, we have
    \begin{align}
        \label{eq:proof1}
        \E[\pnorm{\err_n^{(k)}}^2] \le \E[\norm{\errhat^{(k)}_n}^2] + \Theta \E[\norm{\Proj \errhat^{(k)}_n}^2] + 4 N r^2.
    \end{align}
    Since $ \bz_n^{(k)} \in B(\rho) $ for $ k = 1, \dots, \Ne$ by assumption, taking $ \z_0 = \za_n^{(k)}$ in~\Cref{lem:pred_bound} and considering~\eqref{eq:proof1} yield
    \begin{align*}
        \E[\pnorm{\err_n^{(k)}}^2]
        & \le b_1(\tau)\E[\norm{\err_{n-1}^{(k)}}^2] + b_2(\tau)\E[\norm{\Proj \err_{n-1}^{(k)}}^2] \\
        & \quad + \Theta(a_1(\tau)\E[\norm{\err_{n-1}^{(k)}}^2] + \E[\norm{\Proj \err_{n-1}^{(k)}}^2]) + 4Nr^2 \\
        & = (\Theta a_1(\tau) + b_1(\tau))\E[\norm{\err_{n-1}^{(k)}}^2] + (\Theta + b_2(\tau))\E[\norm{\Proj \err_{n-1}^{(k)}}^2] \\
        & \le \theta \E[\pnorm{\err_n^{(k)}}^2] + 4Nr^2,
    \end{align*}
    where $ \theta \coloneqq \max\{\Theta(\alpha) a_1(\tau) + b_1(\tau),\Theta(\alpha) + b_2(\tau)\} $.
    From~\eqref{eq:pred_contraction}, taking $ \tau > 0 $ small enough and $ \alpha > 0 $ large enough so that $ \theta < 1 $,
    we have
    \begin{align*}
        \E[\pnorm{\err^{(k)}_n}^2] \le \theta \E[\pnorm{\err^{(k)}_{n-1}}^2] + 4 N r^2.
    \end{align*}
    This gives~\eqref{eq:error_bound}.
    Furthermore, taking the limit $n \rightarrow \infty$ yields~\eqref{eq:error_bound_limit}.
\end{proof}

\begin{corollary}
Under the conditions of~\Cref{thm:error_bound}, we consider the accurate observation limit $ \robs \rightarrow 0$.
Then, we have 
\begin{align}
    \label{eq:order_r}
    \limsup_{n \rightarrow \infty} \E[\pnorm{\err_n^{(k)}}^2] = O(\robs^2).
\end{align}
\end{corollary}
\begin{proof}
    Since $\Theta = O(r^4)$, we have $\theta = O(1)$.
    Then, the right hand of~\eqref{eq:error_bound_limit} is reduced to
    \begin{align*}
        \frac{4 N r^2}{1 - \theta} = O(r^2). 
    \end{align*}
\end{proof}

The symmetric property of $ \Pf_n^\alpha$ plays a major role in the proof of~\Cref{lem:analysis_bound}, and hence in the proof of~\Cref{thm:error_bound}.
From~\eqref{eq:kalman_gain_po}, we need to estimate the operator norm $ \opnorm{(I+A)^{-1}}$ for $ A = \Pf_n \Hobs^\top \Robs^{-1} \Hobs$ in the error analysis of the EnKF.
In the proof of~\Cref{thm:error_bound}, $ A $ is reduced to $ r^{-2}\Pf_n^\alpha = r^{-2} \Proj (\Pf_n + \alpha^2 I_{J}) \Proj $ owing to the projection in~\Cref{def:inflation_proj}.
However, this matrix $A$ is not symmetric in general.
In such a case, the operator norm $\opnorm{(I+A)^{-1}}$ always exceeds $1$ (i.e., non-contracting).
The following remark provides a typincal example that naturally emerges through the matrix product including a projection matrix.
\begin{remark}[The operator norm of a non-symmetric matrix]
    Let us consider a matrix $A \in \R^{2 \times 2}$ as
    \begin{align*}
        A = \left(
        \begin{array}{cc}
            1 & * \\
            z & *
        \end{array}
        \right)
        \left(
        \begin{array}{cc}
            1 & 0 \\
            0 & 0
        \end{array}
        \right)
        = \left(
        \begin{array}{cc}
            1 & 0 \\
            z & 0
        \end{array}
        \right),
    \end{align*}
    where $z \neq 0$.
    Then, it follows that 
    \begin{align*}
        \opnorm{(I + A)^{-1}} > 1.
    \end{align*}
    Indeed, this can be easily checked as follows.
    From the formula of an invertible $2\times 2$ matrix, we have
    \begin{align*}
        (I + A)^{-1}
        = \frac{1}{2} \left(
            \begin{array}{cc}
                1 & 0 \\
                -z & 2
            \end{array}
            \right).
    \end{align*}
    In general, the operator norm of a matrix $B$ is given by the largest singular value $s_1(B)$, which is the square root of the largest eigenvalue of $B^\top B$.
    For $B = (I+A)^{-1}$, we have
    \begin{align*}
        [(I + A)^{-1}]^\top(I + A)^{-1} = 
        \frac{1}{4} \left(
            \begin{array}{cc}
                1 & -z \\
                -z & z^2+4
            \end{array}
            \right).
    \end{align*}
    An eigenvalue $\lambda$ of this matrix satisfies the characteristic equation
    \begin{align}
        4\lambda^2 - (z^2+5)\lambda + 1 = 0.
    \end{align}
    The larger solution $\lambda_+$ is given by
    \begin{align*}
        \lambda_+ = \frac{z^2+ 5 + \sqrt{(z^2+5)^2 - 16}}{8} > 1
    \end{align*}
    for any $z \neq 0$.
    This implies that $\opnorm{(I+A)^{-1}} > 1$.
\end{remark}

\subsection{Error bound without the covariance projection}
In this section, we provide the error bound of the PO method without the covariance projection in the additive inflation, i.e., $\Pf_n^\alpha = \Pf_n + \alpha^2 I_{J}$.
The major obstacle in the analysis is the treatment of a non-symmetric matrix $I_{J} + r^{-2} \Pf_n^\alpha \Pi$.
\begin{theorem}[Error bound of the projected PO method]
    \label{thm:error_bound2}
    Let us consider the Lorenz 96 model~\eqref{eq:l96_dissipative}, the observation matrix~\eqref{eq:l96_observation}, and the PO method with the covariance inflation~\eqref{eq:inflated_covariance}.
    Suppose that $ \Robs = r^2 I_{2J'} $ for $ r > 0 $, and $ \bz^{(k)}_n \in B(\rho) $ for $ k = 1, \dots, \Ne $ and $ n \in \N $.
    Then, there exist $ \tau > 0 $, $ \alpha > 0 $, and $ \theta \in (0, 1) $ such that
    \begin{align}
        \label{eq:error_bound2}
        \E[\pnorm{\err^{(k)}_n}^2] \le \theta^n \E[\pnorm{\err^{(k)}_0}^2] + 4 N r^2 \frac{1 - \theta^n}{1 - \theta}
    \end{align}
    for $ k = 1, \dots, \Ne $ and $ n \in \N $, where $ N = \min\{\Ne-1, \Ny\} $ and $ \Ny = 2J'$.
    Furthermore, it follows that
    \begin{align}
        \label{eq:error_bound_limit2}
        \limsup_{n\rightarrow\infty}\E[\pnorm{\err^{(k)}_n}^2] \le \frac{4 N r^2}{1 - \theta}.
    \end{align}
\end{theorem}
We need to prove an alternative for~\Cref{lem:analysis_bound}.
\begin{lemma}
    \label{lem:analysis_bound2}
    Under the conditions in~\Cref{thm:error_bound2}, let $ \Theta = \left(\frac{\robs^2}{\robs^2 + \alpha^2}\right)^2 $.
    Then, there is $\widetilde{\Theta} \ge 1$ such that
    \begin{align}
        \E[\norm{\Proj \err^{(k)}_n}^2] & \le \Theta(\alpha) \E[\norm{\Proj \errhat^{(k)}_n}^2] + 2 N r^2 \label{eq:analysis_bound_proj2} \\
        \E[\norm{\err^{(k)}_n}^2] & \le \widetilde{\Theta}(\alpha) \E[\norm{\errhat^{(k)}_n}^2] + 2 N r^2 \label{eq:analysis_bound2}
    \end{align}
    for $ k = 1, \dots, \Ne $, and $ n \in \N $ where $ N = \min\{\Ne-1, \Ny\} $ and $ \Ny = 2J'$.
    Moreover, $\widetilde{\Theta}$ monotonically decreases to $1$ as $\alpha \rightarrow \infty$.
\end{lemma}
\begin{proof}
    We note that $\Pf_n^\alpha = \Pf_n + \alpha^2 I_J$ here.
    Under conditions of~\Cref{thm:error_bound2},~\eqref{eq:po:update2} is reduced to
    \begin{align*}
        (I_{J} + r^{-2} \Pf_n^\alpha \Pi) \bz_n^{(k)} = \bzf_n^{(k)} + r^{-2}\Pf_n^\alpha \Pi \by_n^{(k)}.
    \end{align*}
    As the proof of~\Cref{lem:analysis_bound}, we have
    \begin{align*}
        (I_{J} + r^{-2} \Pf_n^\alpha \Pi) \err_n^{(k)} = \errhat_n^{(k)} + r^{-2}\Pf_n^\alpha \Pi \by_n^{(k)}.
    \end{align*}
    Here, $I_{J} + r^{-2} \Pf_n^\alpha \Pi$ is not symmetric in general.
    It is still invertible owing to~\eqref{eq:lem:a11} in~\Cref{lem:a1}.
    Hence, applying $(I_{J} + r^{-2} \Pf_n^\alpha \Pi)^{-1}$ yields 
    \begin{align}
        \label{eq:analysis_proof2}
        \err_n^{(k)} = (I_{J} + r^{-2} \Pf_n^\alpha \Pi)^{-1} \errhat_n^{(k)} + (I_{J} + r^{-2} \Pf_n^\alpha \Pi)^{-1} r^{-2}\Pf_n^\alpha \Pi \by_n^{(k)}.
    \end{align}
    It is easy to obtain~\eqref{eq:analysis_bound_proj2} since applying $\Pi$ and~\eqref{eq:lem:a12} in~\Cref{lem:a2} provides
    \begin{align*}
        \Pi \err_n^{(k)} = (\Pi + r^{-2} \Pi \Pf_n^\alpha \Pi)^{-1} \Pi \errhat_n^{(k)} + (\Pi + r^{-2} \Pi \Pf_n^\alpha \Pi)^{-1} r^{-2} \Pi \Pf_n^\alpha \Pi \by_n^{(k)},
    \end{align*}
    which is equivalent to~\eqref{eq:analysis_proj_proof}.
    Therefore, we obtain~\eqref{eq:analysis_bound_proj2} in the same way as for~\eqref{eq:analysis_bound_proj}.
    
    On the other hand, to obtain~\eqref{eq:analysis_bound2}, it is sufficient to prove
    \begin{align}
        \label{eq:analysis_norm11} \opnorm{(I_{J} + r^{-2} \Pf_n^\alpha \Pi)^{-1}} \le \widetilde{\Theta}^{1/2} \\
        \label{eq:analysis_norm12} \opnorm{(I_{J} + r^{-2} \Pf_n^\alpha \Pi)^{-1} r^{-2}\Pf_n^\alpha \Pi} \le 1
    \end{align}
    for some $\widetilde{\Theta} = \widetilde{\Theta}(\alpha) \ge 1$ with $\widetilde{\Theta} \rightarrow 1 $ as $\alpha \rightarrow \infty$.
    We first prepare a block representation on an orthogonal decomposition $\ran{\Pi} \oplus \ker{\Pi}$.
    Let $ S_n = r^{-1} \Pf_n^\alpha $ and
    \begin{align*}
        S_n = \left(
        \begin{array}{cc}
            S_{11} & S_{12} \\
            S_{21} & S_{22}
        \end{array}
        \right)
    \end{align*}
    on $\ran{\Pi} \oplus \Ker{\Pi}$.
    Based on this, we have
    \begin{align*}
        I + S_n \Pi = \left(
        \begin{array}{cc}
            I + S_{11} & O \\
            S_{21} & I
        \end{array}
        \right),
    \end{align*}
    and its inverse is represented as
    \begin{align*}
        (I + S_n \Pi)^{-1} = \left(
        \begin{array}{cc}
            (I + S_{11})^{-1} & O \\
            -S_{21}(I + S_{11})^{-1} & I
        \end{array}
        \right).
    \end{align*}
    From~\Cref{lem:a2}, we have
    \begin{align*}
        \opnorm{(I + S_n \Pi)^{-1}}^2\le \opnorm{(I + S_{11})^{-1}}^2 + \opnorm{S_{21}}^2\opnorm{(I + S_{11})^{-1}}^2 + 1.
    \end{align*}
    where $\opnorm{(I + S_{11})^{-1}} = \opnorm{\left[\Pi + \Pi r^{-2} (\Pf_n + \alpha^2 I)\Pi\right]^{-1}} \le (1 + r^{-2}\alpha^2)^{-1}$.
    Since $S_{21} $ is equivalent to $ \Pi(r^{-2}\Pf_n + \alpha^2 I) (I - \Pi) = \Pi(r^{-2}\Pf_n) (I - \Pi)$, it follows that $\opnorm{S_{21}} \le r^{-2} \opnorm{\Pf_n}$.
    Moreover, there exists a constant $c = c(\rho) > 0$ such that $\opnorm{\Pf_n} \le c$ since $\bzf_n^{k} \in B(\rho) $ for $k=1, \dots, m$.
    Therefore, the operator norm is estimated as
    \begin{align*}
        \opnorm{(I + S_n \Pi)^{-1}}^2 \le \Theta \coloneqq (1 + r^{-2}\alpha^2)^{-2} \left(1 + r^{-4}c(\rho)^2\right) + 1 \rightarrow 1
    \end{align*}
    as $\alpha \rightarrow \infty$. This implies~\eqref{eq:analysis_norm11}.
    For~\eqref{eq:lem:a12}, we compute 
    \begin{align*}
        (I + S_n \Pi)^{-1}S_n \Pi 
        & = \left(
        \begin{array}{cc}
            (I + S_{11})^{-1} & O \\
            -S_{21}(I + S_{11})^{-1} & I
        \end{array}
        \right)
        \left(
        \begin{array}{cc}
            S_{11} & O \\
            S_{21} & O
        \end{array}
        \right)\\
        & = \left(
        \begin{array}{cc}
            (I + S_{11})^{-1}S_{11} & O \\
            -S_{21}(I + S_{11})^{-1}S_{11} + S_{21} & O
        \end{array}
        \right)
        = \left(
        \begin{array}{cc}
            (I + S_{11})^{-1}S_{11} & O \\
            S_{21}(I + S_{11})^{-1} & O
        \end{array}
        \right).
    \end{align*}
    Here, the last equality follows from $(I + S_{11})^{-1}S_{11} = I - (I + S_{11})^{-1}$.
    From~\Cref{lem:a2}, we have
    \begin{align*}
        \opnorm{(I + S_n \Pi)^{-1}S_n \Pi}^2 \le \opnorm{(I + S_{11})^{-1}S_{11}}^2 + \opnorm{S_{21}(I + S_{11})^{-1}}^2.
    \end{align*}
    The first term on the upper bound is computed owing to the symmetric property as 
    \begin{align*}
        \opnorm{(I + S_{11})^{-1}S_{11}}^2
        & = \opnorm{\left[\Pi + \Pi r^{-2} (\Pf_n + \alpha^2 I)\Pi\right]^{-1}\Pi r^{-2} (\Pf_n + \alpha^2 I)\Pi}^2 \\
        & = \lambda_{max}\left(\left[\Pi + \Pi r^{-2} (\Pf_n + \alpha^2 I)\Pi\right]^{-1}\Pi r^{-2} (\Pf_n + \alpha^2 I)\Pi\right)^2 \\
        & = \left(\frac{r^{-2}(\lambda_{max}(\Pf_n)+ \alpha^2)}{1 + r^{-2}(\lambda_{max}(\Pf_n) + \alpha^2)}\right)^2.
    \end{align*}
    The second term on the upper bound is estimated as
    \begin{align*}
        \opnorm{S_{21}(I + S_{11})^{-1}}^2
        \le \opnorm{S_{21}}^2\opnorm{(I + S_{11})^{-1}}^2
        \le \opnorm{S_{21}}^2 (1 + r^{-2}\alpha^2)^{-2}.
    \end{align*}
    By putting $t=\frac{1}{1 + r^{-2}\alpha^2}$, $\mu = r^{-2}\lambda_{max}(\Pf_n)$, and $\nu=\opnorm{S_{21}} $, we have
    \begin{align*}
        \opnorm{(I + S_n \Pi)^{-1}S_n \Pi}^2 \le f(t) \coloneqq \left(\frac{(\mu-1)t + 1}{\mu t + 1}\right)^2 + \nu^2 t^2.
    \end{align*}
    Taking the derivative yields
    \begin{align*}
        \left. \frac{d}{dt} f(t) \, \right|_{t=0} = \left. 2 \frac{(\mu-1)t + 1}{\mu t + 1} \frac{(\mu-1)(\mu t + 1) - ((\mu-1)t + 1)\mu}{(\mu t + 1)^2} + 2 \nu^2 t \,\right|_{t=0} = -2 < 0.
    \end{align*}
    Since $f(0) = 1$, it follows that $f(t) < 1$ for sufficiently small $t>0$.
    This implies that the inequality~\eqref{eq:analysis_norm12} holds for sufficiently large $\alpha$.
\end{proof}

\begin{proof}[Proof of~\Cref{thm:error_bound2}]
    The update from proof of~\Cref{thm:error_bound} is the definition of $\theta$:
    \begin{align}
        \theta \coloneqq \max\{\Theta(\alpha) a_1(\tau) + \widetilde{\Theta}(\alpha)b_1(\tau),\Theta(\alpha) + b_2(\tau)\},
    \end{align}
    where $ a_1(\tau)$, $ b_1(\tau)$, $b_2(\tau)$, $\Theta(\alpha)$, and $\widetilde{\Theta}(\alpha)$ are from~\Cref{lem:pred_bound,lem:analysis_bound2}.
    Taking sufficiently small $\tau > 0$ yields that $b_1(\tau),b_2(\tau) < 1$ from~\Cref{lem:pred_bound}.
    Therefore we have $\widetilde{\Theta}(\alpha)b_1(\tau) < 1$ for sufficiently large $\alpha$ from~\Cref{lem:analysis_bound2}.
    Moreover, since $\Theta(\alpha) $ monotonically decreases to $0$ as $\alpha \rightarrow \infty$, it follows that $\Theta(\alpha) a_1(\tau) + \widetilde{\Theta}(\alpha)b_1(\tau) < 1$ and $\Theta(\alpha) + b_2(\tau)$.
    This implies that $\theta < 1$ for sufficiently small $\tau > 0$ and sufficiently large $\alpha>0$.
\end{proof}

\section{Numerical example}
\label{sec:numerical}
For the model parameters $ J = 60 $ and $ F = 8.0 $, we approximate the time evolution $\Psi_\tau$ of the Lorenz 96 model~\eqref{eq:l96} using the fourth-order Runge-Kutta method with a time step size $\Delta t = 0.01 $.
We fix the observation interval $ \tau = 0.01 $ and compute the true trajectory by $ (\bx_n)_{n=1}^T$ according to~\eqref{eq:dynamics_disc} for $ \bx_n \in B(\rho)$ and $ T = 2000 $.
For the partial observation matrix~\eqref{eq:l96_observation}, $ \Robs = \robs^2 I_{40}$, and $\robs = 1.0$, we generate the observation sequence $ (\by_n)_{n=1}^T $ according to~\eqref{eq:obs}.
We apply the PO method with the additive inflation~\eqref{eq:inflated_covariance} and the projected additive inflation~\eqref{eq:inflated_projected_covariance} for $\Ne = 10$ and $ \alpha=0.0, 0.5, 2.0$.
\Cref{fig:se} shows the time series of the Mean Squared Error (MSE) $ \frac{1}{m} \sum_{k=1}^m \E [\pnorm{\err^{(k)}_n}^2] $ where the expectation $ \E $ is approximated by $ 5 $ independent sample paths.
In practice, it is difficult to evaluate $\theta$.
Instead, we assume that $\theta$ is sufficiently small, provided that the inflation method effectively suppresses the error growth in the prediction step.
Then, we have the upper bound $ 4 N_y r^2$ in~\eqref{eq:error_bound_limit} by ignoring $ \theta$ and reducing $ N = \max\{m-1, N_y\} =N_y$.
The bound is indicated by a black line in~\Cref{fig:se}.

\begin{figure}[htbp]
    \centering
    \includegraphics[width=140mm]{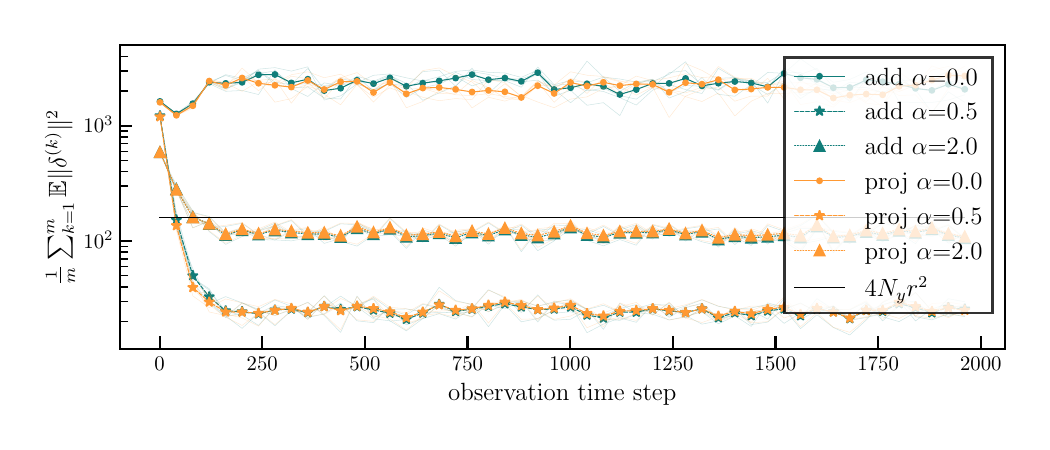}
    \caption{Plot of the Mean Squared Error $ \frac{1}{m} \sum_{k=1}^m \E [\pnorm{\err^{(k)}_n}^2] $ vs. observation time step with a log-scale y-axis. The covariance inflation methods are the additive inflation~\eqref{eq:inflated_covariance} (add) and the projected additive inflation~\eqref{eq:inflated_projected_covariance} (proj) with $ \alpha = 0.0, 0.5, 2.0 $. Each sample path is indicated in low opacity.}
    \label{fig:se}
\end{figure}
As shown in~\Cref{fig:se}, the PO method with additive inflation (add) achieves an MSE comparable to that of the PO method with the projected additive inflation (proj).
With no inflation ($\alpha=0.0$),  each MSE is significantly larger than the estimated bound indicated by the black line.
With strong inflation ($\alpha=2.0$), each MSE is smaller than the estimated bound. This implies that the additive inflation effectively reduces the state estimation error of the PO method.
With small inflation ($\alpha=0.5$), each MSE is even smaller than that with strong inflation.
These results validate the error bounds derived in~\Cref{thm:error_bound,thm:error_bound2}.
Furthermore, they suggest that our theoretical results do not necessarily yield the optimal value of the inflation parameter $\alpha$ in terms of minimizing the MSE.

\section{Summary}
\label{sec:summary}
We have established the uniform-in-time error bounds of the PO method for the partially observed Lorenz 96 model, both with and without the covariance projection.
To obtain the theoretical bounds, we utilized additive covariance inflation, which provides a mathematically tractable framework.
While the covariance projection forces the matrix product to be symmetric, our analysis without the projection successfully addressed the core difficulty of handling non-symmetric matrices that naturally emerge in the filtering process.
Furthermore, the choice of the observation matrix associated with the Lorenz 96 model is crucial, as the error in the unobserved space decays along the time evolution of the system.
The numerical examples demonstrate comparable accuracies between the standard PO method and the projected PO method, consistently remaining within the estimated bounds.
However, our theoretical analysis does not provide an estimate for the optimal value of the inflation parameter.

One of the future directions is to generalize the current analysis to other dynamical models, including infinite-dimensional systems with observation patterns tailored to each dynamics.
Since the error analysis for the EnKF with abstract dissipative dynamical systems has already been established in~\cite{sanz-alonsoLongTimeAccuracyEnsemble2025}, the key challenge is to extend our operator norm estimates for non-symmetric matrices to those of non-normal operators.
Another important direction is to consider adaptive observation matrices that reflect the time-varying instability of the dynamics.
Even for the 3DVar, the error bounds have not yet been established with adaptive observation matrices.
To achieve this, we need to impose specific conditions on the dynamics that characterize its time-varying instability and connect them theoretically to the observation matrices.

\section*{Acknowledgments}
I would like to thank Professor Takashi Sakajo for the helpful discussions.
I used an AI tool to edit or polish the authors' written text for spelling, grammar, or general style.

\bibliographystyle{siamplain}
\bibliography{references}

\appendix
\section{Matrix calculations}
We prepare a lemma for matrix calculations.
\begin{lemma}
    \label{lem:a1}
    Let $S, \Pi \succeq 0$. Then, we have
    \begin{align}
        \label{eq:lem:a11}
        \sigma(S\Pi) \subset [0, \infty),
    \end{align}
    where $\sigma({}\cdot{})$ denotes the spectrum of a matrix.
    Hence, $I + S\Pi$ is invertible.
    Moreover, if $\Pi$ is a projection matrix, we have
    \begin{align}
        \label{eq:lem:a12}
        \Pi(I + S\Pi)^{-1} = \Pi(I + \Pi S \Pi)^{-1} \Pi.
    \end{align}
\end{lemma}
\begin{proof}
    The bound of the spectrum~\eqref{eq:lem:a11} follows from~\cite{hladnikSpectrumProductOperators1988}.
    Since $\Pi^2 = \Pi$, we have
    \begin{align*}
        (I + \Pi S \Pi)\Pi = \Pi (I + S\Pi).
    \end{align*}
    Hence, it follows that
    \begin{align*}
        \Pi (I + S\Pi)^{-1} = (I + \Pi S \Pi)^{-1} \Pi.
    \end{align*}
    Left-multiplying by $\Pi$ yields~\eqref{eq:lem:a12}.
\end{proof}

\begin{lemma}
    \label{lem:a2}
    Let $\R^L = \mathcal{H}_1 \oplus \mathcal{H}_2$ be an orthogonal decomposition.
    We write a matrix $ A \in \R^{L \times L}$ in the block representation
    \begin{align*}
        A = \left(
        \begin{array}{cc}
            A_{11} & A_{12} \\
            A_{21} & A_{22}
        \end{array}
        \right),
    \end{align*}
    associated with the decomposition.
    Then, we have
    \begin{align}
        \label{eq:lem:a2}
        \opnorm{A} \le \sqrt{\sum_{i,j=1}^2 \opnorm{A_{i,j}}^2}.
    \end{align}
\end{lemma}
\begin{proof}
    Let $\bx = \bx_1 \oplus \bx_2 \in \mathcal{H}_1 \oplus \mathcal{H}_2$.
    Then, we have
    \begin{align*}
        A\bx_1 \oplus \bx_2 = (A_{1, 1} \bx_1 + A_{1, 2} \bx_2) \oplus (A_{2, 1} \bx_1 + A_{2, 2} \bx_2).
    \end{align*}
    From the triangle inequality and definition of an operator norm, it follows that
    \begin{align*}
        \norm{A\bx_1 \oplus \bx_2}^2
        & = \sum_{i}^2 \norm{A_{i, 1} \bx_1 + A_{i, 2} \bx_2}^2 \le \sum_{i}^2 (\norm{A_{i, 1} \bx_1} + \norm{A_{i, 2} \bx_2})^2 \\
        & \le \sum_{i}^2 (\opnorm{A_{i, 1}} \norm{\bx_1} + \opnorm{A_{i, 2}}\norm{\bx_2})^2 \\
        & \le \opnorm{A'}^2 (\norm{\bx_1}^2 + \norm{\bx_2}^2),
    \end{align*}
    where $ A' $ is a block matrix of operator norms:
    \begin{align*}
        A' = \left(
        \begin{array}{cc}
            \opnorm{A_{11}} & \opnorm{A_{12}} \\
            \opnorm{A_{21}} & \opnorm{A_{22}}
        \end{array}
        \right) \in \R^{2 \times 2}.
    \end{align*}
    Therefore, we have $\opnorm{A} \le \opnorm{A'}$.
    Taking the Frobenius norm of $A'$ yields~\eqref{eq:lem:a2}
\end{proof}

\end{document}